\newcommand{\C}{{\mathbb{C}}}
\newcommand{\ket}[1]{|{#1}\rangle}
\begin{document}

\title{Constructions of  unextendible entangled bases
}

\titlerunning{Constructions of  unextendible entangled bases}        

\author{Fei Shi        \and
       Xiande Zhang   \and
        Yu Guo
}

\authorrunning{F. Shi et al.} 

\institute{F. Shi  \at
              School of Cyber Security,
              University of Science and Technology of China, Hefei, 230026, People's Republic of China. \\
              \email{shifei@mail.ustc.edu.cn}\\
              X. Zhang \at
              School of Mathematical Sciences,
              University of Science and Technology of China, Hefei, 230026, People's Republic of China. \\
              \email{drzhangx@ustc.edu.cn}\\
              Y. Guo \at
              Institute of Quantum Information Science, Shanxi Datong University, Datong 037009,  People's Republic of China.\\
              \email{guoyu3@aliyun.com} }

\date{Received: date / Accepted: date}

\maketitle

\begin{abstract}
We provide several constructions of special unextendible entangled bases with fixed Schmidt number $k$ (SUEB$k$) in $\C^{d}\otimes \C^{d'}$ for $2\leq k\leq d\leq d'$.  We generalize the space decomposition method in Guo [Phys. Rev. A 94, 052302 (2016)], by proposing a systematic way of constructing new SUEB$k$s in $\C^{d}\otimes \C^{d'}$ for $2\leq k < d \leq d'$ or $2\leq k=d< d'$. In addition, we give a construction of a $(pqdd'-p(dd'-N))$-number SUEB$pk$ in $\C^{pd}\otimes \C^{qd'}$ from an $N$-number SUEB$k$ in $\C^{d}\otimes \C^{d'}$ for $p\leq q$  by using permutation matrices.  We also connect a $(d(d'-1)+m)$-number UMEB in $\C^{d}\otimes \C^{d'}$ with an unextendible partial Hadamard matrix $H_{m\times d}$ with $m<d$, which extends the result in [Quantum Inf. Process. \textbf{16}(3), 84 (2017)].

\keywords{Unextendible entangled basis \and Schmidt number \and  Permutation matrix \and Hadamard matrix}

\end{abstract}

\section{Introduction}

\label{sec:intro}

In 1999, Bennett~\textit{et al}. found that unextendible product basis (UPB), a set of incomplete orthonormal product states
whose complementary space has no product states, also can displays nonlocality~\cite{CHDP}.
  Consequently,
the notion of unextendible basis has been explored extensively.
It is shown that UPB also can be used to construct bound entangled states~\cite{DTPJ,Aop,Spo,Skl,Sbb}.
In 2009, Bravyi and Smolin introduced the concept of unextendible maximally entangled bases (UMEB),
which is a set of incomplete orthonormal maximally entangled states whose complementary
space has no maximally entangled states. UMEBs can be used to construct examples of states for which
one-copy entanglement of assistance (EoA) is strictly smaller than the asymptotic, and can be
used to find quantum channels that are unital but not convex mixtures of unitary operations~\cite{Brav,Chen}.
Recently, Guo et al.~\cite{Guo14,GuoM} proposed the concept of special unextendible entangled bases
with fixed Schmidt number $k$ (SUEB$k$), which extends the definitions of both UPB and UMEB.
An SUEB$k$ is a set of incomplete orthonormal  special entangled states with Schmidt number $k$
in $\C^{d}\otimes \C^{d'}$ ($d\leq d'$), i.e., with the same Schmidt coefficients,
whose complementary subspace has no special entangled states with Schmidt number $k$. An SUEB$k$ is a UPB when $k=1$ and is a UMEB when $k=d$.

One of the main topics in this field is the existence of different unextendible bases~\cite{Brav,Wang17,LiMS,Zhang19,Wang14,Guo16,Guo14}.
 Bravyi and Smolin gave a $6$-number UMEB in $\C^{3}\otimes \C^{3}$ and a 12-number
UMEB in $\C^{4}\otimes \C^{4}$ \cite{Brav}. Wang et al. showed that there are
UMEBs in $\C^{d}\otimes \C^{d}$ except for $d=p$ or $2p$, where $p$ is a prime
and $p\equiv 3\pmod 4$ \cite{Wang17}. For $d<d'$, Li et al. showed that UMEBs exist in
any $\C^{d}\otimes \C^{d'}$ and gave explicit constructions \cite{LiMS}.
There are several recursive constructions for UMEBs, such as a construction of a
$(pqd^{2}-p(d^{2}-N))$-number UMEB in $\C^{pd}\otimes\C^{qd}$ ($p\leq q$)
from an $N$-number UMEB in $\C^{d}\otimes \C^{d}$, and a construction of a
$(pqd^{2}-d(pq-N))$-number UMEB in $\C^{pd}\otimes \C^{qd}$ ($p\leq q$)
from an $N$-number UMEB in $\C^{p}\otimes \C^{q}$ \cite{Zhang19,Wang14,Guo16}.
It was shown that SUEB$k$s exist in any bipartite system when $2\leq k<d\leq d'$ \cite{Guo14}.

In this paper, we develop more constructions of SUEB$k$s in $\C^{d}\otimes \C^{d'}$ for  $2\leq k\leq d\leq d'$. In Section \ref{sec:EBk}, we provide a systematic way of constructing SUEB$k$s from a special entangled basis with Schmidt number $k$, and show that there are more new SUEB$k$s in $\C^{d}\otimes \C^{d'}$ for $2\leq k < d \leq d'$ or $2\leq k=d<d'$. In Section \ref{sec:SUEBk}, we give a recursive construction of a $(pqdd'-p(dd'-N))$-number SUEB$pk$ in $\C^{pd}\otimes \C^{qd'}$ from an $N$-number SUEB$k$ in $\C^{d}\otimes \C^{d'}$ for $p\leq q$  by using permutation matrices. Especially, it generates a $(pqdd'-p(dd'-N))$-number UMEB in $\C^{pd}\otimes \C^{qd'}$ from an $N$-number UMEB in $\C^{d}\otimes \C^{d'}$ for $p\leq q$. In Section \ref{sec:had}, we show that there is a $(d(d'-1)+m)$-number UMEB in $\C^{d}\otimes \C^{d'}$ if there exists an unextendible partial Hadamard matrix $H_{m\times d}$. Our constructions generalize and improve most results in \cite{Chen,LiMS,Wang14,Guo14,Guo16,Wang17,ZhanJ,Zhang19}, and provide new SUEB$k$s and UMEBs.

\section{Definition and preliminary}
\label{sec:def}

Assume that $1\leq k \leq d \leq d'$ in this paper. Let $[n]$ denote the set $\{1,2,\cdots,n\}$, $[n]^{*}$ denote the set $\{0,1,\cdots,n-1\}$, and $\{a,a,\ldots,a\}_k$ denote a multiset of $k$ numbers of $a$. The Schmidt decomposition of  a pure state  $\ket{\psi}\in\C^{d}\otimes\C^{d'}$ \cite{Niel}: $\ket{\psi}=\sum_{i=0}^{k-1}\lambda_{i}\ket{i}\ket{i'}$, where $\{\ket{i}\}$ and $\{\ket{i'}\}$ are orthonormal sets of $\C^{d}$ and $\C^{d'}$, respectively. Then the Schmidt number of $\ket{\psi}$, denoted by $Sr(|\psi\rangle)$, is $k$. If all the Schmidt coefficients are  $\{\frac{1}{\sqrt{k}},\frac{1}{\sqrt{k}},\frac{1}{\sqrt{k}}\cdots,\frac{1}{\sqrt{k}}\}_{k}$, then $\ket{\psi}$ is called a special entangled state with Schmidt number $k$. A special entangled state with Schmidt number $k$ is a product state  when $k=1$; and it is a maximally entangled state  when $k=d$.

Let $\mathcal{M}_{d\times d'}$ be the space of all $d\times d'$ complex matrices equipped with an inner product defined by $(A,B)=\text{Tr}(A^{\dag}B)$. There is a one-to-one relation between the space $\C^{d}\otimes\C^{d'}$ and the space $\mathcal{M}_{d\times d'}$ \cite{Guo15,GuoM}:
\begin{equation}
\begin{aligned}
|\psi_{i}\rangle=&\sum_{k,l}a_{k,l}^{(i)}|k\rangle|l'\rangle\in \C^{d}\otimes \C^{d'} \Longleftrightarrow A_{i}=(a_{k,l}^{(i)})\in \mathcal{M}_{d\times d'},\\
&Sr(|\psi_{i}\rangle)=\text{rank}(A_{i}), \ \langle\psi_{i}|\psi_{j}\rangle=\text{Tr}(A_{i}^{\dag}A_{j}),
\end{aligned}
\end{equation}
where $\{\ket{k}\}$ and $\{\ket{l'}\}$ are the standard computational bases of $\C^{d}$ and $\C^{d'}$, respectively. A $d\times d'$ matrix is called a {\it $k$-singular-value-1 matrix} if its nonzero singular values are $\{1,1,1\cdots,1\}_{k}$. Then $\ket{\psi_{i}}$ is a special entangled state with Schmidt number $k$ if and only if $\sqrt{k}A_{i}$ is a $k$-singular-value-1 matrix. Specially, when $d=d'$, $|\psi_{i}\rangle$ is a maximally entangled state if and only if $\sqrt{d}A_{i}$ is a unitary matrix \cite{Guo16}.

\begin{definition} \cite{Guo15} A set of special entangled states with Schmidt number $k$ $\{|\psi_{i}\rangle\}_{i=1}^{dd'}$ of $\C^{d}\otimes \C^{d'}$ is called a  special entangled basis with Schmidt number $k$ (SEB$k$) if  $\langle\psi_{i}|\psi_{j}\rangle=\delta_{ij}$.
\end{definition}

\begin{definition} \cite{GuoM,Guo14} A set of special entangled states with Schmidt number $k$  $\{|\psi_{i}\rangle: i\in{[n]},n<dd'\}$ of $\C^{d}\otimes \C^{d'}$ is called an $n$-number special unextendible  entangled basis with  Schmidt number $k$ (SUEB$k$) if
	\begin{enumerate}[(i)]
		\item $\langle\psi_{i}|\psi_{j}\rangle=\delta_{ij}$;
		\item If $\langle\psi_{i}|\phi\rangle=0$ for all $i\in[n]$, then $\ket{\phi}$ can not be a  special entangled state with Schmidt number $k$.
	\end{enumerate}
\end{definition}

Note that the condition (ii) in Definition 2.2 is a bit weaker than that in \cite{GuoM,Guo14}, where it states that ``if $\langle\psi_{i}|\phi\rangle=0$ for all $i\in[n]$, then $Sr(|\psi\rangle)\neq k$". Specially, an SEB$k$ is a product basis and  an SUEB$k$ is a UPB when $k=1$;  an SEB$k$ is a maximally entangled basis (MEB) and an SUEB$k$ is a UMEB  when $k=d$  \cite{Brav,Chen,CHDP,DTPJ}. Analogous to SEB$k$s and SUEB$k$s, we give the definitions regarding to $k$-singular-value-1 matrices.

\begin{definition} A set of $k$-singular-value-1 matrices $\{A_{i}\}_{i=1}^{dd'}$ of $\mathcal{M}_{d\times d'}$ is called a $k$-singular-value-1 Hilbert-Schmidt basis (SV1B$k$) if $\text{Tr}(A_{i}^{\dag}A_{j})=k\delta_{ij}$.
\end{definition}

\begin{definition} A set of $k$-singular-value-1 matrices $\{A_{i}: i\in[n],n<dd'\}$ of $\mathcal{M}_{d\times d'}$ is called an $n$-number unextendible $k$-singular-value-1 Hilbert-Schmidt basis (USV1B$k$) if
	\begin{enumerate}[(i)]
		\item $\text{Tr}(A_{i}^{\dag}A_{j})=k\delta_{ij}$;
		\item If $\text{Tr}(A_{i}^{\dag}B)=0$ for all $i\in[n]$, then $B$ can not be a $k$-singular-value-1 matrix.
	\end{enumerate}
\end{definition}

Due to the one-to-one relation, $\{|\psi_{i}\rangle\}$ is an SEB$k$ if and only if $\{A_{i}\}$ is an SV1B$k$; and $\{|\psi_{i}\rangle\}$ is an SUEB$k$ if and only if $\{A_{i}\}$ is a USV1B$k$.

\begin{lemma}\label{ebk} \cite{Guo15} If $k|dd'$, then there is an SEB$k$ in $\C^{d}\otimes \C^{d'}$, and consequently there is an SV1B$k$ in  $\mathcal{M}_{d\times d'}$.
\end{lemma}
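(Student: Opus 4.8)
\emph{Proof proposal.} By the isometric correspondence recalled above ($\langle\psi_i|\psi_j\rangle=\mathrm{Tr}(A_i^{\dag}A_j)$ and $Sr(|\psi_i\rangle)=\mathrm{rank}(A_i)$, with $|\psi_i\rangle$ special of Schmidt number $k$ iff $\sqrt{k}A_i$ is $k$-singular-value-$1$), an SEB$k$ in $\C^{d}\otimes\C^{d'}$ is literally the same object as an SV1B$k$ in $\mathcal{M}_{d\times d'}$, so the whole statement reduces to exhibiting $dd'$ pairwise $\mathrm{Tr}$-orthogonal $k$-singular-value-$1$ matrices in $\mathcal{M}_{d\times d'}$ (they then span automatically, since $\dim\mathcal{M}_{d\times d'}=dd'$). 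The plan is to build each such matrix as a \emph{phased partial permutation matrix}: a matrix whose nonzero entries all have modulus $1$ and occupy $k$ distinct rows and $k$ distinct columns, which visibly has all singular values equal to $1$ with multiplicity $k$. So I need (a) a partition of the $dd'$ entry positions $[d]^{*}\times[d']^{*}$ into $m:=dd'/k$ ``patterns'' of size $k$, each with pairwise distinct rows and pairwise distinct columns; and (b) on each pattern, a family of $k$ mutually orthogonal phased partial permutation matrices.

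Part (b) is the routine step. Given a pattern $\{(r_0,q_0),\dots,(r_{k-1},q_{k-1})\}$ with the $r_i$ distinct and the $q_i$ distinct, let $E_{r,q}$ be the standard matrix units and $\omega=e^{2\pi i/k}$, and set $A_b:=\sum_{i=0}^{k-1}\omega^{ib}E_{r_i,q_i}$ for $b\in[k]^{*}$. Row-distinctness kills the cross terms, so $A_b^{\dag}A_b=\sum_i E_{q_i,q_i}$ is a rank-$k$ orthogonal projection, whence $A_b$ is $k$-singular-value-$1$; and $\mathrm{Tr}(A_b^{\dag}A_{b'})=\sum_{i=0}^{k-1}\omega^{i(b'-b)}=k\delta_{b,b'}$. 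Matrices arising from different patterns have disjoint supports, hence are orthogonal. Indexing the patterns by $c\in[m]^{*}$, this yields $mk=dd'$ matrices $A_{c,b}$ with $\mathrm{Tr}(A_{c,b}^{\dag}A_{c',b'})=k\delta_{c,c'}\delta_{b,b'}$, i.e.\ an SV1B$k$.

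Part (a) carries the real content. I would list the positions as $j\mapsto(r_j,q_j)$ for $j\in[dd']^{*}$, with $r_j:=j\bmod d$ and $q_j:=(\lfloor j/d\rfloor+j\bmod d)\bmod d'$; this is a bijection onto $[d]^{*}\times[d']^{*}$ (fixing a target row pins $j$ to the progression $td+r$, $t\in[d']^{*}$, and the target column then pins $t$ uniquely). Declare the $c$-th pattern to be the block $\{(r_{ck+i},q_{ck+i}):i\in[k]^{*}\}$; since $k\mid dd'$ every block is full, and they partition all positions. Row-distinctness inside a block is automatic because $j\mapsto j\bmod d$ is injective on any $k\le d$ consecutive integers. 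Column-distinctness is the step to verify: if two positions of a block have indices differing by $s\in\{1,\dots,k-1\}$ and lie in the same ``row-block'' $[td,(t+1)d)$, then $q$ changes by $\pm s\not\equiv 0\pmod{d'}$ since $s\le k-1\le d-1<d'$; since $s<d$ a block meets at most one boundary $td$, and for a pair straddling such a boundary one computes $q_j-q_{j'}\equiv d-1-s\pmod{d'}$, which lies in $\{d-k,\dots,d-2\}\subseteq\{1,\dots,d'-1\}$ whenever $k<d$, while if $k=d$ the blocks coincide with the row-blocks and no straddling occurs.

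The only delicate point is precisely this last verification: once $k\nmid d$, the length-$k$ blocks no longer line up with the rows of the array, and one has to exclude a repeated column in a block that crosses a row-block boundary — that is exactly where the hypotheses $k\le d\le d'$ are consumed (the assumption $k\mid dd'$ being used only to guarantee that each of the $m$ blocks contains exactly $k$ positions). Everything else — the Fourier orthogonality, the projection identity in (b), and the dimension count $mk=dd'$ — is mechanical.
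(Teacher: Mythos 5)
Your proof is correct and complete. Note that the paper itself does not prove this lemma --- it is quoted from \cite{Guo15} without proof --- so there is no in-paper argument to compare against; but your construction (partition the grid $[d]^{*}\times[d']^{*}$ into diagonal-type patterns of $k$ cells with distinct rows and distinct columns, then put $k$-th roots of unity on each pattern) is exactly the construction underlying the cited reference and the paper's own Example~3.2, where the $28$ states in $\C^{6}\otimes\C^{7}$ are precisely sign-phased diagonal patterns of length $4$. Your two nontrivial checks are sound: the Fourier/disjoint-support computation gives $\mathrm{Tr}(A_{c,b}^{\dag}A_{c',b'})=k\delta_{c,c'}\delta_{b,b'}$, and the column-distinctness analysis correctly isolates the only dangerous case (a pair straddling a row-block boundary with $q_j-q_{j'}\equiv d-1-s\pmod{d'}$), which is excluded because $s\le k-1$ forces either $d-1-s\ge 1$ when $k<d$ or no straddling at all when $k=d$. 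The one presentational gap is that you should state explicitly that column-distinctness (not just row-distinctness) is needed for $\sum_i E_{q_i,q_i}$ to be a projection rather than a matrix with a diagonal entry $\ge 2$; you do verify it, but the logical role of that hypothesis in part (b) is left implicit.
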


Let $\mathcal{L}$ denote a subspace of $\mathcal{M}_{d\times d'}$, $\mathcal{L}^{\bot}$ denote the complementary space of $\mathcal{L}$, and $\oplus$ denote the direct sum. Inspired by Lemmas 2 and  3 of \cite{Guo16}, we get the following two lemmas.
\begin{lemma}\label{usv}  Let $\mathcal{M}_{d\times d'}=\mathcal{L}\oplus\mathcal{L}^{\bot}$. If there is an SV1B$k$ $\{A_{i}\}$ in $\mathcal{L}$ and there is no $k$-singular-value-1 matrix in $\mathcal{L}^{\bot}$, then $\{A_{i}\}$ is a USV1B$k$ in $\mathcal{M}_{d\times d'}$.
\end{lemma}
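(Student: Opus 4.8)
The plan is to verify directly the two defining conditions of a USV1B$k$ in Definition 2.4 for the set $\{A_i\}$, now regarded as a subset of the full space $\mathcal{M}_{d\times d'}$ rather than of $\mathcal{L}$. Condition (i), namely $\text{Tr}(A_{i}^{\dag}A_{j})=k\delta_{ij}$, is immediate: it is part of the hypothesis that $\{A_i\}$ is an SV1B$k$ in $\mathcal{L}$, and the Hilbert--Schmidt inner product on $\mathcal{L}$ is just the restriction of the one on $\mathcal{M}_{d\times d'}$. One should also record that the number $n$ of matrices equals $\dim\mathcal{L}$, which is strictly less than $dd'=\dim\mathcal{M}_{d\times d'}$ exactly when $\mathcal{L}^{\bot}\neq\{0\}$; since $\mathcal{L}^{\bot}$ is assumed to be the (empty) repository of $k$-singular-value-1 matrices, one tacitly works in the case $\mathcal{L}\subsetneq\mathcal{M}_{d\times d'}$, so that $n<dd'$ as required by the definition.

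For condition (ii), I would take any $B\in\mathcal{M}_{d\times d'}$ with $\text{Tr}(A_{i}^{\dag}B)=0$ for every $i\in[n]$ and show $B\in\mathcal{L}^{\bot}$. Since $\{A_i\}$ is a basis of $\mathcal{L}$, every $C\in\mathcal{L}$ can be written as $C=\sum_{i}c_{i}A_{i}$, whence $\text{Tr}(C^{\dag}B)=\sum_{i}\overline{c_{i}}\,\text{Tr}(A_{i}^{\dag}B)=0$. Thus $B$ is Hilbert--Schmidt orthogonal to all of $\mathcal{L}$, i.e. $B\in\mathcal{L}^{\bot}$ by the orthogonal decomposition $\mathcal{M}_{d\times d'}=\mathcal{L}\oplus\mathcal{L}^{\bot}$. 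By hypothesis $\mathcal{L}^{\bot}$ contains no $k$-singular-value-1 matrix, so $B$ cannot be one, which is precisely condition (ii). Together with (i) this shows $\{A_i\}$ is a USV1B$k$ in $\mathcal{M}_{d\times d'}$.

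There is no genuine obstacle here; the only care needed is the bookkeeping about dimensions (making sure $n<dd'$, which forces $\mathcal{L}^{\bot}\neq\{0\}$) and the elementary passage from ``$B$ is orthogonal to each $A_i$'' to ``$B$ is orthogonal to the span $\mathcal{L}$''. The whole content is the orthogonal direct sum $\mathcal{M}_{d\times d'}=\mathcal{L}\oplus\mathcal{L}^{\bot}$ together with the fact that an orthogonal spanning set of $\mathcal{L}$ has $\mathcal{L}^{\bot}$ as its common orthogonal complement. If desired, the identical argument transports through the matrix--state correspondence to yield the parallel statement that the associated states $\{|\psi_{i}\rangle\}$ form an SUEB$k$, but phrasing it for matrices, as in the statement, is the cleanest route and is what the subsequent constructions will use.
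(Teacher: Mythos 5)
Your proof is correct and is essentially the argument the paper intends (the paper states this lemma without an explicit proof, attributing it to the space-decomposition method of Lemmas 2 and 3 in \cite{Guo16}): orthogonality of $B$ to the spanning set $\{A_i\}$ forces $B\in\mathcal{L}^{\bot}$, which by hypothesis contains no $k$-singular-value-1 matrix. Your bookkeeping remark that one must tacitly have $\mathcal{L}\subsetneq\mathcal{M}_{d\times d'}$ so that $n<dd'$ is a fair point the paper glosses over, and it does hold in all the applications.
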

\begin{lemma}\label{usv1}  Let $\mathcal{M}_{d\times d'}=\mathcal{L}\oplus\mathcal{L}^{\bot}$. If there is an SV1B$k$
	$\{A_{i}\}$ in $\mathcal{L}$ and a USV1B$k$ $\{B_{i}\}$ in $\mathcal{L}^{\bot}$, then $\{A_{i}\}\cup\{B_{i}\}$ is a USV1B$k$ in $\mathcal{M}_{d\times d'}$.
\end{lemma}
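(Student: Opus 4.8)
The plan is to verify directly that $\{A_{i}\}\cup\{B_{i}\}$ satisfies the two conditions defining a USV1B$k$ (Definition 2.4), using that $\mathcal{M}_{d\times d'}=\mathcal{L}\oplus\mathcal{L}^{\bot}$ is an \emph{orthogonal} decomposition under the Hilbert--Schmidt inner product $(X,Y)=\mathrm{Tr}(X^{\dag}Y)$. For condition (i): each $A_{i}$ and each $B_{i}$ is a $k$-singular-value-1 matrix by hypothesis; $\mathrm{Tr}(A_{i}^{\dag}A_{j})=k\delta_{ij}$ and $\mathrm{Tr}(B_{i}^{\dag}B_{j})=k\delta_{ij}$ since $\{A_{i}\}$ is an SV1B$k$ in $\mathcal{L}$ and $\{B_{i}\}$ is a USV1B$k$ in $\mathcal{L}^{\bot}$; and $\mathrm{Tr}(A_{i}^{\dag}B_{j})=0$ for all $i,j$ because $A_{i}\in\mathcal{L}$, $B_{j}\in\mathcal{L}^{\bot}$, and the sum is orthogonal (this also shows all listed matrices are distinct, since $\mathcal{L}\cap\mathcal{L}^{\bot}=\{0\}$ and each is nonzero). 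Counting: as an SV1B$k$ \emph{of} $\mathcal{L}$ the family $\{A_{i}\}$ is a basis of $\mathcal{L}$, so it has exactly $\dim\mathcal{L}$ elements, while $\{B_{i}\}$ has fewer than $\dim\mathcal{L}^{\bot}$ elements by definition of a USV1B$k$ in $\mathcal{L}^{\bot}$; hence the union has fewer than $\dim\mathcal{L}+\dim\mathcal{L}^{\bot}=dd'$ elements.

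For condition (ii), I would take an arbitrary $B\in\mathcal{M}_{d\times d'}$ with $\mathrm{Tr}(A_{i}^{\dag}B)=0$ for all $i$ and $\mathrm{Tr}(B_{i}^{\dag}B)=0$ for all $i$, and show $B$ is not a $k$-singular-value-1 matrix. The first batch of equations says $B$ is orthogonal to every element of the spanning set $\{A_{i}\}$ of $\mathcal{L}$, hence $B\perp\mathcal{L}$, i.e.\ $B\in\mathcal{L}^{\bot}$. Now within $\mathcal{L}^{\bot}$, $B$ is orthogonal to all of $\{B_{i}\}$, so the unextendibility of the USV1B$k$ $\{B_{i}\}$ in $\mathcal{L}^{\bot}$ forces $B$ not to be a $k$-singular-value-1 matrix. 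Together with the previous paragraph, this shows $\{A_{i}\}\cup\{B_{i}\}$ is a USV1B$k$ in $\mathcal{M}_{d\times d'}$.

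The argument is essentially bookkeeping on top of the orthogonality of the direct sum; the only point that needs care is that $\{A_{i}\}$, being an SV1B$k$ of $\mathcal{L}$ rather than merely a subset of it, spans $\mathcal{L}$, so that orthogonality to all $A_{i}$ pins $B$ entirely into $\mathcal{L}^{\bot}$ and no part of $B$ can hide in $\mathcal{L}$. Once that is noted, the $k$-singular-value-1 obstruction is inherited verbatim from $\{B_{i}\}$ and no genuine analytic difficulty arises. This mirrors the reasoning behind Lemma 3 of \cite{Guo16}, now phrased for general Schmidt number $k$ in the matrix picture.
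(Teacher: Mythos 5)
Your proof is correct and is exactly the intended argument: the paper states this lemma without proof (attributing it to the analogue of Lemma 3 in \cite{Guo16}), and the direct verification you give --- cross-orthogonality from the orthogonal direct sum, the count $\dim\mathcal{L}+(\text{something}<\dim\mathcal{L}^{\bot})<dd'$, and the observation that orthogonality to the spanning set $\{A_{i}\}$ forces any candidate extension into $\mathcal{L}^{\bot}$ where the unextendibility of $\{B_{i}\}$ applies --- is precisely the reasoning the authors rely on. You correctly identify the one point needing care, namely that $\{A_{i}\}$ spans $\mathcal{L}$ so no component of $B$ can ``hide'' there.
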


\section{Constructions of SUEB$k$s from SEB$k$s}
\label{sec:EBk}

In this section, we introduce several constructions of SUEB$k$s from SEB$k$s in $\C^{d}\otimes \C^{d'}$.

In the following notations, each $\mathcal{L}$ defines a subspace of $\mathcal{M}_{d\times d'}$ which consists of all matrices having zero entries in the specified places. The subscripts of $\mathcal{L}^{(i)}$ denote the size of the submatrices consisting of stars for $i\in[5]$, and the size of the bottom right submatrix for $i=6$. Let
\begin{align*}
\mathcal{L}_{d\times (d'-i)}^{(1)}=\left(
\begin{matrix}
*      & \cdots &  *       & 0      &\cdots & 0      \\
\vdots & \ddots & \vdots   & \vdots &\ddots & \vdots \\
*      & \cdots &  *       & 0      &\cdots & 0
\end{matrix}
\right)_{d\times d'},\ \ \
\mathcal{L}_{d\times i}^{(2)}=\left(
\begin{matrix}
0      & \cdots &  0       & *      &\cdots & *      \\
\vdots & \ddots & \vdots   & \vdots &\ddots & \vdots \\
0     & \cdots &   0       & *      &\cdots & *
\end{matrix}
\right)_{d\times d'},
\end{align*}
\begin{align*}
\mathcal{L}_{(d-i)\times d'}^{(3)}=\left(
\begin{matrix}
*      & \cdots &  * \\
\vdots & \ddots &\vdots \\
*      & \cdots &  * \\
0      & \cdots &  0 \\
\vdots & \ddots &\vdots \\
0      & \cdots &  0
\end{matrix}
\right)_{d\times d'},\ \ \
\mathcal{L}_{i\times d'}^{(4)}=\left(
\begin{matrix}
0      & \cdots &  0 \\
\vdots & \ddots &\vdots \\
0      & \cdots &  0 \\
*      & \cdots &  * \\
\vdots & \ddots &\vdots \\
*      & \cdots &  *
\end{matrix}
\right)_{d\times d'},
\end{align*}
\begin{align*}
\mathcal{L}_{(d-i)\times (d'-t)}^{(5)}=\left(
\begin{matrix}
*      & \cdots &  *    &0 &\cdots &0              \\
\vdots & \ddots &\vdots &\vdots & \ddots &\vdots  \\
*      & \cdots &  *    &0 &\cdots &0               \\
0      & \cdots &  0    &0 &\cdots &0                \\
\vdots & \ddots &\vdots &\vdots & \ddots &\vdots     \\
0      & \cdots &  0    &0 &\cdots &0
\end{matrix}
\right)_{d\times d'} \text{  and   }
\mathcal{L}_{i\times t}^{(6)}=\left(
\begin{matrix}
0      & \cdots &  0    &* &\cdots &*              \\
\vdots & \ddots &\vdots &\vdots & \ddots &\vdots  \\
0      & \cdots &  0    &* &\cdots &*               \\
*      & \cdots &  *    &* &\cdots &*                \\
\vdots & \ddots &\vdots &\vdots & \ddots &\vdots     \\
*      & \cdots &  *    &* &\cdots &*
\end{matrix}
\right)_{d\times d'}, 
\end{align*}
then $\mathcal{M}_{d\times d'}$ has direct-sum decompositions $\mathcal{L}_{d\times (d'-i)}^{(1)}\oplus \mathcal{L}_{d\times i}^{(2)}$, $\mathcal{L}_{(d-i)\times d'}^{(3)}\oplus \mathcal{L}_{i\times d'}^{(4)}$ and $\mathcal{L}_{(d-i)\times (d'-t)}^{(5)}\oplus \mathcal{L}_{i\times t}^{(6)}$.

We apply Lemmas~\ref{ebk} and~\ref{usv} to the following four cases.
\begin{enumerate}
	\item[(1)]\label{case1} $k|d$. For any  $i\in[k-1]$ satisfying $d'-i\geq k$, use decomposition
	$$\mathcal{M}_{d\times d'}
	=\mathcal{L}_{d\times (d'-i)}^{(1)}\oplus \mathcal{L}_{d\times i}^{(2)}.$$
	Since $k|d$, there is an SV1B$k$ in $\mathcal{L}_{d\times (d'-i)}^{(1)}$ from Lemma~\ref{ebk}. As the rank of any matrix in $\mathcal{L}_{d\times i}^{(2)}$ is no more than $i<k$, then there is no $k$-singular-value-1 matrix in $\mathcal{L}_{d\times i}^{(2)}$. Thus there is a $d(d'-i)$-number USV1B$k$ in $\mathcal{M}_{d\times d'}$ by Lemma~\ref{usv}.
\end{enumerate}

By similar arguments, we have the following three cases.
\begin{enumerate}
	\item[(2)] \label{case2} $k\nmid d$. Write $d=sk+r$ such that $0<r<k$. For any $t\in [k-r]^*$ satisfying $d'-t\geq k$, use
	$$\mathcal{M}_{d\times d'}=\mathcal{L}_{sk\times (d'-t)}^{(5)}\oplus \mathcal{L}_{r\times t}^{(6)}.$$
	Then there is an $sk(d'-t)$-number USV1B$k$ in $\mathcal{M}_{d\times d'}$.
	
	\item[(3)] \label{case3} $k|d'$. For any  $i\in[k-1]$ satisfying $d-i\geq k$, use decomposition
	$$\mathcal{M}_{d\times d'}
	=\mathcal{L}_{(d-i)\times d'}^{(3)}\oplus \mathcal{L}_{i\times d'}^{(4)}.$$
	
	Then there is a $d'(d-i)$-number USV1B$k$ in $\mathcal{M}_{d\times d'}$.
	
	\item[(4)] \label{case4} $k\nmid d'$. Write $d'=sk+r$ such that $0<r<k$. For any $t\in [k-r]^*$ satisfying $d-t\geq k$, use
	$$\mathcal{M}_{d\times d'}
	=\mathcal{L}_{(d-t)\times sk}^{(5)}\oplus \mathcal{L}_{t\times r}^{(6)}.$$
	Then there is an $sk(d-t)$-number USV1B$k$ in $\mathcal{M}_{d\times d'}$.
\end{enumerate}

For any $k,d$ and $d'$ satisfying the conditions of any of the above four cases, we can construct SUEB$k$s in $\C^{d}\otimes \C^{d'}$  from SEB$k$s. However, if $k=d=d'$ or $k=1$, none of the four cases are satisfied. So we can not use this method to construct UMEBs in $\C^{d}\otimes \C^{d}$ or UPBs in $\C^{d}\otimes \C^{d'}$.  We summarize them in the following theorem.
\begin{theorem}
	When $2\leq k <d \leq d'$ or $2\leq k=d<d'$,  there are four different classes of SUEBks as the cases (1)-(4) above.
\end{theorem}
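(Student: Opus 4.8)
The plan is to verify that each of the four cases (1)--(4) actually produces a valid SUEB$k$ (equivalently, a USV1B$k$) whenever the hypothesis $2\le k<d\le d'$ or $2\le k=d<d'$ holds, and that the resulting four classes are genuinely distinct. The heavy lifting for each case has already been sketched: each case picks an explicit direct-sum decomposition $\mathcal{M}_{d\times d'}=\mathcal{L}\oplus\mathcal{L}^{\bot}$, exhibits an SV1B$k$ in $\mathcal{L}$ via Lemma~\ref{ebk}, observes that every matrix in $\mathcal{L}^{\bot}$ has rank strictly less than $k$ (so $\mathcal{L}^{\bot}$ contains no $k$-singular-value-1 matrix), and then invokes Lemma~\ref{usv}. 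So the first thing I would do is check that under the theorem's hypothesis the index sets in each case are nonempty, i.e.\ that the constraints defining the allowed $i$ (resp.\ $t$) can be met.

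The key observation is a case split on divisibility. If $k\mid d$ we are in case (1): since $d\le d'$ and $k<d$ (or $k=d<d'$), we have $d'\ge d\ge k+1$ when $k<d$, so $i=1$ satisfies $i\in[k-1]$ and $d'-i\ge k$; when $k=d<d'$ we have $d'\ge d+1=k+1$, again $i=1$ works. If $k\nmid d$, write $d=sk+r$ with $0<r<k$; then $s\ge1$ (since $d>k$ forces $d\ge k+1>r$, so $sk=d-r\ge1$), $t=0\in[k-r]^{*}$, and $d'-0=d'\ge d\ge k$, putting us in case (2). Symmetrically, if $k\mid d'$ then case (3) applies with $i=1$ provided $d-1\ge k$, which holds when $k<d$; and if $k\nmid d'$ then case (4) applies with $t=0$ provided $d\ge k$. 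Thus when $k<d$ all of cases (1)/(2) (according to $k\mid d$ or not) and (3)/(4) (according to $k\mid d'$ or not) are available; when $k=d<d'$ we still have cases (1) or (2) available on the $d$-side (with $d=k$, so $k\mid d$ and case (1) applies with $i=1$, using $d'>d=k$), and cases (3)/(4) on the $d'$-side with $t=0$ and $d-t=d=k\ge k$. In every subcase of the hypothesis at least one member of each of the ``column-type'' family $\{(1),(2)\}$ and the ``row-type'' family $\{(3),(4)\}$ is nonempty, and moreover the sub-subcases within (1)--(4) can be combined to list four classes.

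For distinctness of the four classes, the cleanest route is to point to the structural shape of the supporting subspace $\mathcal{L}$: in cases (1),(2) the nonzero block spans full columns (the matrices live in a left/bottom-left block of width $d'-i$), while in cases (3),(4) it spans full rows; and within each pair the two cases are distinguished by whether $k\mid d$ (resp.\ $k\mid d'$) or not, which changes the block dimensions and the cardinality $d(d'-i)$ versus $sk(d'-t)$ (resp.\ $d'(d-i)$ versus $sk(d-t)$). One can also simply note that the numbers of elements $d(d'-i)$, $sk(d'-t)$, $d'(d-i)$, $sk(d-t)$ are attained for the listed parameter ranges, and that these ranges are nonempty, so the four descriptions are not vacuous. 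I would finish by remarking that $k=1$ and $k=d=d'$ are genuinely excluded: when $k=1$ every matrix has a $1$-singular-value-$1$ scaling of any rank-$1$ piece so $\mathcal{L}^{\bot}$ always contains such matrices, and when $k=d=d'$ we would need $d'-i\ge k=d'$ or $d-i\ge k=d$ with $i\ge1$, which is impossible, so none of the four decompositions is legitimate. The main obstacle, such as it is, is purely bookkeeping: making sure the inequality $d'-i\ge k$ (or $d-t\ge k$, etc.) is simultaneously compatible with $i\in[k-1]$ (or $t\in[k-r]^{*}$) under the two hypotheses $k<d\le d'$ and $k=d<d'$; once the nonemptiness of the index sets is confirmed, Lemmas~\ref{ebk} and~\ref{usv} do the rest.
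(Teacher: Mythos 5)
Your proposal is correct and follows essentially the same route as the paper: the paper's proof of this theorem is exactly the preceding case analysis (1)--(4), each combining Lemma~\ref{ebk} with Lemma~\ref{usv} after observing that every matrix in the complementary block has rank below $k$. Your extra bookkeeping on the nonemptiness of the index sets under the two hypotheses, and on why $k=1$ and $k=d=d'$ are excluded, only makes explicit what the paper leaves implicit.
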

\begin{example}
	There is a $28$-number SUEB$4$ in $\C^{6}\otimes \C^{7}$.
	
	Since $4\nmid 6$, we can get a $28$-number SUEB$4$ in $\C^{6}\otimes \C^{7}$  from  Case (2) with $t=0$.
	$$|\psi_{1}\rangle=\frac{1}{2}(|00\rangle+|11\rangle+|22\rangle+|33\rangle), \
	|\psi_{2}\rangle=\frac{1}{2}(|01\rangle+|12\rangle+|23\rangle+|34\rangle),$$
	$$|\psi_{3}\rangle=\frac{1}{2}(|02\rangle+|13\rangle+|24\rangle+|35\rangle), \ |\psi_{4}\rangle=\frac{1}{2}(|03\rangle+|14\rangle+|25\rangle+|36\rangle),$$
	$$|\psi_{5}\rangle=\frac{1}{2}(|04\rangle+|15\rangle+|26\rangle+|30\rangle), \ |\psi_{6}\rangle=\frac{1}{2}(|05\rangle+|16\rangle+|20\rangle+|31\rangle),$$
	$$|\psi_{7}\rangle=\frac{1}{2}(|06\rangle+|10\rangle+|21\rangle+|32\rangle), \
	|\psi_{8}\rangle=\frac{1}{2}(|00\rangle-|11\rangle+|22\rangle-|33\rangle),$$
	$$|\psi_{9}\rangle=\frac{1}{2}(|01\rangle-|12\rangle+|23\rangle-|34\rangle), \
	|\psi_{10}\rangle=\frac{1}{2}(|02\rangle-|13\rangle+|24\rangle-|35\rangle),$$  $$|\psi_{11}\rangle=\frac{1}{2}(|03\rangle-|14\rangle+|25\rangle-|36\rangle), \
	|\psi_{12}\rangle=\frac{1}{2}(|04\rangle-|15\rangle+|26\rangle-|30\rangle),$$  $$|\psi_{13}\rangle=\frac{1}{2}(|05\rangle-|16\rangle+|20\rangle-|31\rangle), \
	|\psi_{14}\rangle=\frac{1}{2}(|06\rangle-|10\rangle+|21\rangle-|32\rangle),$$
	$$|\psi_{15}\rangle=\frac{1}{2}(|00\rangle+|11\rangle-|22\rangle-|33\rangle), \
	|\psi_{16}\rangle=\frac{1}{2}(|01\rangle+|12\rangle-|23\rangle-|34\rangle),$$
	$$|\psi_{17}\rangle=\frac{1}{2}(|02\rangle+|13\rangle-|24\rangle-|35\rangle), \ |\psi_{18}\rangle=\frac{1}{2}(|03\rangle+|14\rangle-|25\rangle-|36\rangle),$$
	$$|\psi_{19}\rangle=\frac{1}{2}(|04\rangle+|15\rangle-|26\rangle-|30\rangle), \ |\psi_{20}\rangle=\frac{1}{2}(|05\rangle+|16\rangle-|20\rangle-|31\rangle),$$
	$$|\psi_{21}\rangle=\frac{1}{2}(|06\rangle+|10\rangle-|21\rangle-|32\rangle), \
	|\psi_{22}\rangle=\frac{1}{2}(|00\rangle-|11\rangle-|22\rangle+|33\rangle),$$
	$$|\psi_{23}\rangle=\frac{1}{2}(|01\rangle-|12\rangle-|23\rangle+|34\rangle), \
	|\psi_{24}\rangle=\frac{1}{2}(|02\rangle-|13\rangle-|24\rangle+|35\rangle),$$  $$|\psi_{25}\rangle=\frac{1}{2}(|03\rangle-|14\rangle-|25\rangle+|36\rangle),  \
	|\psi_{26}\rangle=\frac{1}{2}(|04\rangle-|15\rangle-|26\rangle+|30\rangle),$$  $$|\psi_{27}\rangle=\frac{1}{2}(|05\rangle-|16\rangle-|20\rangle+|31\rangle), \
	|\psi_{28}\rangle=\frac{1}{2}(|06\rangle-|10\rangle-|21\rangle+|32\rangle).$$
\end{example}
\begin{remark} Propositions 1 and 2 in \cite{Guo14} belong to our Case (4);  Propositions 3 and 4 in \cite{Guo14} belong to our Case (2);  Propositions 5 and 6 in \cite{Guo14} belong to our Case (1) and Case (3). But we give  more constructions than those in \cite{Guo14}, which can be easily seen from the constructions of SUEB$4$s in $\C^{7}\otimes \C^{12}$.  In fact, Case $(3)$ provides a $72$-number SUEB$4$, a $60$-number SUEB$4$ and a $48$-number SUEB$4$ in $\C^{7}\otimes \C^{12}$, while  \cite{Guo14} only gives a $48$-number SUEB$4$ in $\C^{7}\otimes \C^{12}$. Also, Our constructions  cover all of the results in \cite{Chen,LiMS,Guo16,ZhanJ}. See Table \ref{Our}.	
\end{remark}

\begin{table}[]
	\caption{Our results about SUEB$k$s in $\C^{d}\otimes \C^{d'}, 2\leq k\leq d\leq d'$.}
	\centering
	\renewcommand\tabcolsep{15.0pt}
	\begin{tabular}{cc}
		\hline\noalign{\smallskip}
		Condition & No. of SUEB$k$\\
		\noalign{\smallskip}\hline\noalign{\smallskip}
		$k|d$                           & $d(d'-i)$, $i\in[k-1], d'-i\geq k$ \\
		$d=sk+r, r\in[k-1]$             & $sk(d'-t)$, $t\in[k-r]^{*}, d'-t\geq k$\\
		$k|d'$                          & $d'(d-i)$, $i\in[k-1], d-i\geq k$ \\
		$d'=sk+r, r\in[k-1]$            & $sk(d-t)$,  $t\in[k-r]^{*}, d-t\geq k$ \\       	
		\noalign{\smallskip}\hline
	\end{tabular}\label{Our}
\end{table}

\section{SUEB$pk$s from SUEB$k$s}
\label{sec:SUEBk}

In this section, we give a general construction of SUEB$pk$s in $\C^{pd}\otimes \C^{qd'}$ from SUEB$k$s in $\C^{d}\otimes \C^{d'}$, where $p\leq q$. We introduce a combinatorial object first.
A permutation matrix is a square matrix that has exactly one entry of $1$ in each row and each column and $0$s elsewhere. By abusing of this concept, for nonsquare matrices, we call a $p\times q$ matrix ($p\leq q$) a permutation matrix if it has exactly one entry of 1 in each row and at most one entry of 1 in each column and 0s elsewhere. Let $J_{p\times q}$ be a $p\times q$ matrix with all entries being 1, then $J_{p\times q}$ can be decomposed as $J_{p\times q}=P_{0}+P_{1}+\cdots+P_{q-1}$, where each $P_{i}$ is a permutation matrix. For example, let $P_{l}=P_{0}T^{l}$, $l\in [q]^{*}$, where
\begin{align*}
P_{0}
&=\left(
\begin{matrix}
1      &0           &\cdots      & 0               &0        &\cdots    & 0       \\
0      &1 &\cdots      & 0               &0        &\cdots    & 0       \\
\vdots &\vdots      &\ddots      & \vdots          &\vdots&  &\vdots              \\
0      &0           &\ldots      &1 &0        &\cdots    & 0
\end{matrix}
\right)_{p\times q} \text{ and }\ \
T=\left(
\begin{matrix}
0      &1        &0      &\cdots       &0              \\
0      &0        &1      &\cdots       &0              \\
\vdots &\vdots   &\vdots &\ddots       &\vdots          \\
0      &0        &0      &\cdots       &1                \\
1      &0        &0      &\cdots       &0
\end{matrix}
\right)_{q\times q}.
\end{align*}

\begin{theorem}\label{thm1}
	If there is an $N$-number SUEB$k$ in $\C^{d}\otimes \C^{d'}$ constructed from Section \ref{sec:EBk}, and $k|dd'$, then there is a $(pqdd'-p(dd'-N))$-number SUEB$pk$ in $\C^{pd}\otimes \C^{qd'}$ for $p\leq q$.
\end{theorem}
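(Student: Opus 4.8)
The plan is to pass to the matrix picture of Section~\ref{sec:def} and to construct a USV1B$pk$ in $\mathcal{M}_{pd\times qd'}$ of size $pqdd'-p(dd'-N)$, which is equivalent to the desired SUEB$pk$ in $\C^{pd}\otimes\C^{qd'}$. The key device is the isometric identification of $\mathcal{M}_{pd\times qd'}$ with $\mathcal{M}_{p\times q}\otimes\mathcal{M}_{d\times d'}$ obtained by regarding a $pd\times qd'$ matrix as a $p\times q$ array of $d\times d'$ blocks; it preserves the Hilbert--Schmidt inner product, and the singular values of a block-array product $R\otimes A$ are the pairwise products of the singular values of $R\in\mathcal{M}_{p\times q}$ and $A\in\mathcal{M}_{d\times d'}$, so $R\otimes A$ is a $pk$-singular-value-$1$ matrix whenever $R$ is a $p$-singular-value-$1$ matrix and $A$ a $k$-singular-value-$1$ matrix. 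I will use two inputs from the hypotheses. First, the SUEB$k$ coming from Section~\ref{sec:EBk} is accompanied by an orthogonal decomposition $\mathcal{M}_{d\times d'}=\mathcal{L}\oplus\mathcal{L}^{\bot}$ in which $\{A_{j}\}_{j=1}^{N}$ is an SV1B$k$ spanning $\mathcal{L}$ and, crucially, \emph{every} matrix in $\mathcal{L}^{\bot}$ has rank at most $k-1$ (visible case by case from the shapes $\mathcal{L}^{(2)},\mathcal{L}^{(4)},\mathcal{L}^{(6)}$ used there). Second, since $k\mid dd'$, Lemma~\ref{ebk} supplies a full SV1B$k$ $\{C_{i}\}_{i=1}^{dd'}$ of $\mathcal{M}_{d\times d'}$.

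Next I would fix the ambient decomposition. Let $(a,b_{a})$, $a\in[p]$, be the positions of the $1$'s of the permutation matrix $P_{q-1}$, so $b_{1},\dots,b_{p}$ are distinct; let $\mathcal{R}\subseteq\mathcal{M}_{p\times q}$ be the $p$-dimensional coordinate subspace of matrices supported on these positions, and set $\mathcal{K}^{\bot}=\mathcal{R}\otimes\mathcal{L}^{\bot}$, so that $\mathcal{K}:=(\mathcal{K}^{\bot})^{\bot}=(\mathcal{R}^{\bot}\otimes\mathcal{M}_{d\times d'})\oplus(\mathcal{R}\otimes\mathcal{L})$. Counting dimensions gives $\dim\mathcal{K}^{\bot}=p(dd'-N)$ and $\dim\mathcal{K}=pqdd'-p(dd'-N)$. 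The unextendibility input is that $\mathcal{K}^{\bot}$ contains no $pk$-singular-value-$1$ matrix: any $M\in\mathcal{K}^{\bot}$ is a block matrix whose only nonzero blocks sit in the positions $(a,b_{a})$ and lie in $\mathcal{L}^{\bot}$, and since those positions occupy pairwise distinct block-rows and pairwise distinct block-columns, a permutation of block-columns makes $M$ block-diagonal; hence $\mathrm{rank}(M)=\sum_{a}\mathrm{rank}(M_{a,b_{a}})\le p(k-1)<pk$, so $M$ is not a $pk$-singular-value-$1$ matrix.

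It then remains to exhibit an SV1B$pk$ inside $\mathcal{K}$, which I would do by tensoring ``phased permutation matrices'' with the $k$-singular-value-$1$ bases above. For $m\in[p]^{*}$ put $D_{m}=\mathrm{diag}(1,\omega^{m},\dots,\omega^{(p-1)m})$ with $\omega=e^{2\pi i/p}$; then $D_{m}P_{l}$ has the same support as $P_{l}$, satisfies $(D_{m}P_{l})(D_{m}P_{l})^{\dag}=I_{p}$, and $\mathrm{Tr}\big((D_{m}P_{l})^{\dag}(D_{m'}P_{l'})\big)=p\,\delta_{ll'}\delta_{mm'}$ (the cases $l\ne l'$ using that distinct $P_{l}$ have disjoint supports). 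Consequently $\{\,D_{m}P_{l}\colon l\in[q-1]^{*},\,m\in[p]^{*}\,\}$ is an SV1B$p$ of $\mathcal{R}^{\bot}$ and $\{\,D_{m}P_{q-1}\colon m\in[p]^{*}\,\}$ is an SV1B$p$ of $\mathcal{R}$, so that
\[
\{\,(D_{m}P_{l})\otimes C_{i}\colon l\in[q-1]^{*},\,m\in[p]^{*},\,i\in[dd']\,\}\ \cup\ \{\,(D_{m}P_{q-1})\otimes A_{j}\colon m\in[p]^{*},\,j\in[N]\,\}
\]
consists of $p(q-1)dd'+pN=pqdd'-p(dd'-N)$ matrices lying in $\mathcal{K}$, each a $pk$-singular-value-$1$ matrix, pairwise Hilbert--Schmidt orthogonal with squared norm $pk$ (orthogonality between the two families again follows from disjointness of the supports of $P_{l}$, $l\le q-2$, and $P_{q-1}$). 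As their number equals $\dim\mathcal{K}$, they span $\mathcal{K}$ and hence form an SV1B$pk$ of $\mathcal{K}$; Lemma~\ref{usv} applied to $\mathcal{M}_{pd\times qd'}=\mathcal{K}\oplus\mathcal{K}^{\bot}$ then yields the required USV1B$pk$, and translating back gives the SUEB$pk$. The step I expect to be the real obstacle is the unextendibility of $\mathcal{K}^{\bot}$: it genuinely uses that the Section~\ref{sec:EBk} constructions give an $\mathcal{L}^{\bot}$ in which \emph{all} matrices have rank $\le k-1$, since the weaker statement ``$\mathcal{L}^{\bot}$ has no $k$-singular-value-$1$ matrix'' would not suffice --- along the permutation pattern ranks add, so a $pk$-singular-value-$1$ matrix could otherwise be reassembled inside $\mathcal{K}^{\bot}$ from lower-rank pieces.
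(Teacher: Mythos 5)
Your proposal is correct, and the construction itself is the one the paper uses: your phased permutation matrices $D_{m}P_{l}$ are exactly the paper's $Q_{l}^{m}$, and the family you build (full SV1B$k$s over $q-1$ of the permutation patterns, the given USV1B$k$ over the remaining one) coincides with the paper's $\{C_{a,l}^{s,t}\}\cup\{C_{a,0}^{j}\}$ up to relabelling which $P_{l}$ carries the unextendible piece. Where you diverge is in how unextendibility is certified. The paper works inside the single block-pattern subspace $\mathcal{L}_{0}$ and shows that $\{Q_{0}^{a}\otimes A_{j}\}$ is a USV1B$pk$ there: orthogonality to all phases $a$ gives a Vandermonde system $WY_{j}=0$, whence each block $D_{i}$ is orthogonal to every $A_{j}$, has rank $<k$, and so the whole matrix has rank $<pk$; it then concludes with Lemma~\ref{usv1}. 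You instead compute the orthocomplement of the span of the entire family directly as $\mathcal{R}\otimes\mathcal{L}^{\bot}$ via a dimension count on an orthogonal set, bound the rank of everything in it by $p(k-1)$, and invoke Lemma~\ref{usv}. The two arguments are logically equivalent (your dimension count silently does the work of the invertibility of $W$), and both hinge on the same subtle point, which you are right to flag explicitly and which the paper states only tersely: one needs that \emph{every} matrix in the complement $\mathcal{L}^{\bot}$ of the Section~\ref{sec:EBk} construction has rank at most $k-1$ (true for $\mathcal{L}^{(2)},\mathcal{L}^{(4)},\mathcal{L}^{(6)}$ under the stated parameter constraints), not merely that $\mathcal{L}^{\bot}$ contains no $k$-singular-value-$1$ matrix, since ranks add across the disjoint block positions. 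Your version has the small expository advantage of making the hypothesis ``constructed from Section~\ref{sec:EBk}'' visibly load-bearing and of avoiding the explicit matrix inversion.
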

\begin{proof}Given any decomposition  $J_{p\times q}=P_{0}+P_{1}+\cdots+P_{q-1}$. For any $l\in[q]^{*}$, $a\in [p]^{*}$, define a $p\times q$ matrix $Q_{l}^{a}$ by
	$$ Q_{l}^{a}(i,j)=\left\{
	\begin{array}{lll}
	0               &           & \text{if} \ \ P_{l}(i,j)=0,\\
	\xi_{p}^{a(i-1)}    &           & \text{if} \ \ P_{l}(i,j)=1,
	\end{array} \right. $$
	where $\xi_{p}=e^{\frac{2\pi i}{p}}$.
	For each matrix  $M\in \mathcal{M}_{pd\times qd'}$, write it as a block matrix $M=(M_{i,j})_{p\times q}$, where each $M_{i,j}$ is a $d\times d'$ submatrix. Then let $\mathcal{L}_l$ be a subspace of $\mathcal{M}_{pd\times qd'}$ which consists of all block matrices $M$ with  $M_{i,j}=0$ if $P_{l}(i,j)=0$. Then we have space decomposition,
	$$\mathcal{M}_{pd\times qd'}=\mathcal{L}_0\oplus\mathcal{L}_1\oplus\cdots\oplus\mathcal{L}_{q-1},$$
	such that $\dim\mathcal{L}_l=pdd'$ for all $l\in[q]^{*}$.
	
	Assume that $\{A_{j}\}_{j=1}^{N}$ is an $N$-number USV1B$k$ in $\mathcal{M}_{d\times d'}$ constructed from Section \ref{sec:EBk}. Since $k|dd'$, there is an SV1B$k$ in $\mathcal{M}_{d\times d'}$ by Lemma~\ref{ebk}. Denote it by $\{B_{s,t}\}$, where $s\in [d]$ and $t\in [d']$.  	For any $a\in[p]^{*}$, $l\in[q-1]$, $s\in [d]$,  $t\in [d']$ and $j\in [N]$, define
	$$C_{a,l}^{s,t}=Q_{l}^{a}\otimes B_{s,t} \text{ and } C_{a,0}^{j}=Q_{0}^{a}\otimes A_{j}.$$
	Obviously, $C_{a,l}^{s,t}\in \mathcal{L}_{l}$ for any $l\in[q-1]$ and $C_{a,0}^{j}\in \mathcal{L}_{0}$. Now we show that $\{C_{a,l}^{s,t}\}\cup\{C_{a,0}^{j}\}$ is a $(pqdd'-p(dd'-N))$-number USV1B$pk$ in $\mathcal{M}_{pd\times qd'}$.
	
	If the nonzero singular values of two matrices A and B are $\{1,1,\cdots 1\}_{p}$ and $\{1,1,\cdots 1\}_{k}$, respectively, then the singular values of $A\otimes B$ are $\{1,1,\cdots,1\}_{pk}$ \cite{scha}. Thus it is easy to see that $C_{k,l}^{s,t}$ and $C_{a,0}^{j}$ are $pk$-singular-value-1 matrices.
	It is also easy to see  that $\text{Tr}[(C_{\tilde{a},l}^{\tilde{s},\tilde{t}})^{\dag}C_{a,l}^{s,t}]=pk\delta_{\tilde{a}{a}}
	\delta_{\tilde{s}{s}}\delta_{\tilde{t}{t}}$ and $ \text{Tr}[(C_{\tilde{a},{0}}^{\tilde{j}})^{\dag}C_{a,0}^{j}]=pk\delta_{\tilde{a}{a}}
	\delta_{\tilde{j}{j}}$, where $a, \tilde{a}\in[p]^{*}$; $s, \tilde{s}\in [d]$;  $t, \tilde{t}\in [d']$; $j, \tilde{j}\in [N]$ and $l\in[q-1]$.  It follows that $\{C_{a,l}^{s,t}: s\in[d],t\in[d'],a\in[p]^*\}$ is an SV1B$pk$ in $\mathcal{L}_{l}$ for any  $l\in[q-1]$. We assert that $\{C_{a,0}^{j}\}$ is a USV1B$pk$ in $\mathcal{L}_{0}$. Given any $D=(D_{i,j})_{p\times q}\in \mathcal{L}_{0}$, let $D_{i}\triangleq D_{i,j}$ when $P_{0}(i,j)=1$ for $i\in[p]$. If $\text{Tr}(D^{\dag}C_{a,0}^{j})=0$ for all $a\in[p]^{*}$ and $j\in [N]$, then
	\begin{equation}
	\text{Tr}(D_{1}^{\dag}A_{j})+\xi_{p}^{a}\text{Tr}(D_{2}^{\dag}A_{j})+\cdots+\xi_{p}^{a(p-1)}
	\text{Tr}(D_{p}^{\dag}A_{j})=0.  \label{trt}
	\end{equation}
	This is equivalent to $WY_j=0$ for all $j\in [N]$, where
	\begin{align*}
	W
	&=\left(
	\begin{matrix}
	1      &1           &\cdots      &1                   \\
	1      &\xi_{p}     &\cdots      &\xi_{p}^{p-1}         \\
	\vdots &\vdots      &\ddots            &\vdots                \\
	1      &\xi_{p}^{p-1}&\cdots     &\xi_{p}^{(p-1)^{2}}
	\end{matrix}
	\right)_{p\times p}
	\text{ and }Y_j=\left(
	\begin{matrix}
	\text{Tr}(D_{1}^{\dag}A_{j})        \\
	\text{Tr}(D_{2}^{\dag}A_{j})       \\
	\vdots  \\
	\text{Tr}(D_{p}^{\dag}A_{j})
	\end{matrix}
	\right)_{p\times 1}.
	\end{align*}
	Since $\text{det}(W)\neq 0$, we know that $Y_j=0$ for all $j\in [N]$. It means that $\text{Tr}(D_{1}^{\dag}A_{j})=\text{Tr}(D_{2}^{\dag}A_{j})=\cdots=\text{Tr}(D_{p}^{\dag}A_{j})=0$
	for all $j\in [N]$. As $\{A_{j}\}_{j=1}^{N}$ is a USV1B$k$ in $\mathcal{M}_{d\times d'}$ constructed from Section \ref{sec:EBk}, we have $\text{rank}(D_{i})< k$ for all $i\in [p]$. Since the nonzero singular values of $D_{i}$, $i\in [p]$, form all the nonzero singular values of $D$, $D$ can not be a $pk$-singular-value-1 matrix. This shows that $\{C_{a,0}^{j}\}$ is a USV1B$pk$ in $\mathcal{L}_{0}$.
	
	We conclude that $\{C_{a,l}^{s,t}\}\cup\{C_{a,0}^{j}\}$ is a $(pqdd'-p(dd'-N))$-number USV1B$pk$ in $\C^{pd}\otimes \C^{qd'}$ by Lemma~\ref{usv1}.
\end{proof}

From the proof of Theorem~\ref{thm1}, we see that if $k=d$ and $D_{i}$, $i\in[p]$ satisfy Eq~(\ref{trt}) for all $j\in [N]$, then for any  USV1B$d$ $\{A_{j}\}_{j=1}^{N}$ (not necessarily from Section \ref{sec:EBk}), $D_{i}$ is not a $d$-singular-value-1 matrix for $i\in [p]$, and consequently $D$ is not a $pd$-singular-value-1 matrix.  Therefore, we have the following corollary.

\begin{corollary}\label{umeb} If there is an $N$-number UMEB in $\C^{d}\otimes \C^{d'}$, then there is a $(pqdd'-p(dd'-N))$-number UMEB in $\C^{pd}\otimes \C^{qd'}$ for $p\leq q$.
\end{corollary}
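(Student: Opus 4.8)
The plan is to obtain the corollary as the special case $k=d$ of Theorem~\ref{thm1}, but with the hypothesis ``constructed from Section~\ref{sec:EBk}'' removed; as the remark preceding the corollary already indicates, the only place where that hypothesis was used in the proof of Theorem~\ref{thm1} is the step concluding $\operatorname{rank}(D_i)<k$ from $\operatorname{Tr}(D_i^{\dag}A_j)=0$ for all $j\in[N]$, and when $k=d$ this step needs no special structure. So I would set $d'=d$ throughout (the corollary is stated for $\C^d\otimes\C^{d'}$, so actually keep $d'$ general), take $k=d$, and reuse verbatim the construction of Theorem~\ref{thm1}.

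Concretely, first I would fix a decomposition $J_{p\times q}=P_0+\cdots+P_{q-1}$ into permutation matrices and the associated direct-sum decomposition $\mathcal{M}_{pd\times qd'}=\mathcal{L}_0\oplus\cdots\oplus\mathcal{L}_{q-1}$ with $\dim\mathcal{L}_l=pdd'$, exactly as in the proof above. Next, since $d\mid dd'$, Lemma~\ref{ebk} furnishes an SV1B$d$ (an MEB) $\{B_{s,t}\}$ in $\mathcal{M}_{d\times d'}$, and I let $\{A_j\}_{j=1}^N$ be the given $N$-number UMEB (equivalently USV1B$d$) in $\mathcal{M}_{d\times d'}$. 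With $\xi_p=e^{2\pi i/p}$ and the matrices $Q_l^a$ defined as before, set $C_{a,l}^{s,t}=Q_l^a\otimes B_{s,t}\in\mathcal{L}_l$ for $l\in[q-1]$ and $C_{a,0}^{j}=Q_0^a\otimes A_j\in\mathcal{L}_0$. The tensor-product fact about singular values gives that all these are $pd$-singular-value-$1$ matrices, and the Hilbert--Schmidt orthonormality relations $\operatorname{Tr}[(C^{\tilde s,\tilde t}_{\tilde a,l})^{\dag}C^{s,t}_{a,l}]=pd\,\delta_{\tilde a a}\delta_{\tilde s s}\delta_{\tilde t t}$, $\operatorname{Tr}[(C^{\tilde j}_{\tilde a,0})^{\dag}C^{j}_{a,0}]=pd\,\delta_{\tilde a a}\delta_{\tilde j j}$ are the same computations as in Theorem~\ref{thm1}; hence $\{C_{a,l}^{s,t}: s\in[d],t\in[d'],a\in[p]^*\}$ is an SV1B$pd$ in $\mathcal{L}_l$ for each $l\in[q-1]$.

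The one point that must be argued anew is that $\{C_{a,0}^{j}\}$ is a USV1B$pd$ in $\mathcal{L}_0$. Given $D=(D_{i,j})_{p\times q}\in\mathcal{L}_0$ with its $p$ nonzero blocks $D_1,\dots,D_p$, the condition $\operatorname{Tr}(D^{\dag}C_{a,0}^{j})=0$ for all $a\in[p]^*$, $j\in[N]$ yields the Vandermonde system $WY_j=0$ with $W$ the $p\times p$ Fourier matrix, which is invertible, so $\operatorname{Tr}(D_i^{\dag}A_j)=0$ for all $i\in[p]$, $j\in[N]$. Now instead of invoking the structure of Section~\ref{sec:EBk}, I simply use that $\{A_j\}_{j=1}^N$ is a UMEB: by condition (ii) of the definition of a USV1B$d$, a $d\times d'$ matrix orthogonal to all $A_j$ cannot be a $d$-singular-value-$1$ matrix, i.e.\ cannot have all $d$ nonzero singular values equal to $1$; since $d'\geq d$ this forces $\operatorname{rank}(D_i)<d$ for each $i$ (a rank-$d$ matrix orthogonal to every $A_j$ would, after rescaling to singular values $\le$ something, still have $d$ positive singular values—here I must be a little careful, see below). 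Consequently $D_i$ is not a $d$-singular-value-$1$ matrix for any $i$, so the nonzero singular values of $D$, which are the union of those of the $D_i$, cannot be the all-ones multiset of size $pd$; thus $D$ is not a $pd$-singular-value-$1$ matrix. Applying Lemma~\ref{usv1} to $\mathcal{M}_{pd\times qd'}=\mathcal{L}_0\oplus(\mathcal{L}_1\oplus\cdots\oplus\mathcal{L}_{q-1})$ then shows $\{C_{a,l}^{s,t}\}\cup\{C_{a,0}^{j}\}$ is a USV1B$pd$ of the asserted cardinality $pqdd'-p(dd'-N)$, which by the one-to-one correspondence is the desired UMEB in $\C^{pd}\otimes\C^{qd'}$.

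The step I expect to be the only subtle one is precisely the claim ``$\operatorname{Tr}(D_i^{\dag}A_j)=0$ for all $j$ implies $D_i$ is not a $d$-singular-value-$1$ matrix.'' This is not merely ``$\operatorname{rank}(D_i)<d$'': it is exactly condition (ii) of Definition~2.4 (USV1B$d$) applied to $D_i$, so it follows directly from the hypothesis that $\{A_j\}$ is an $N$-number UMEB, with no appeal to how that UMEB was built. I would phrase this cleanly by noting that the implication ``orthogonal to every $A_j$ $\Rightarrow$ not a $d$-singular-value-$1$ matrix'' \emph{is} the defining unextendibility property, so the argument of Theorem~\ref{thm1} goes through unchanged after substituting this implication for the rank argument; everything else (the Fourier/Vandermonde elimination, the tensor-product singular-value identity, the orthonormality bookkeeping, and the final application of Lemmas~\ref{ebk} and~\ref{usv1}) is identical. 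Hence the corollary follows, and taking $q=p$, $d'=d$ recovers in particular the $(p^2d^2-p(d^2-N))$-number UMEB in $\C^{pd}\otimes\C^{pd}$ that was known previously.
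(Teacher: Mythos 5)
Your proof is correct and follows essentially the same route as the paper: the authors also derive the corollary by rerunning the proof of Theorem~\ref{thm1} with $k=d$ and observing that the only place the ``constructed from Section~\ref{sec:EBk}'' hypothesis was used can be replaced by the unextendibility condition of the given UMEB, which directly yields that each block $D_i$ fails to be a $d$-singular-value-$1$ matrix and hence that $D$ fails to be a $pd$-singular-value-$1$ matrix. Your final paragraph correctly resolves the one subtle point (it is ``not $d$-singular-value-$1$,'' not ``$\operatorname{rank}(D_i)<d$,'' that one gets, and that weaker conclusion still suffices since every block would need all $d$ singular values equal to $1$), which is exactly the paper's reasoning.
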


Our method is better than the method in \cite{Wang14,Zhang19,Guo16} since we can choose any decomposition of $J_{p\times q}$ into permutation matrices, and choose any different SV1B$d$s  $\{B_{s,t}\}$ in $\mathcal{M}_{d\times d'}$.
\begin{example}\label{u32}
	There is a $32$-number UMEB in $\C^{4}\otimes \C^{9}$ constructed from a $4$-number UMEB in $\C^{2}\otimes \C^{3}$.
	
	Let $\{A_i\}_{i=1}^4$ be a $4$-number USV1B$2$ in $\mathcal{M}_{2\times 3}$ that is constructed from Section \ref{sec:EBk} Case $(1)$ with $i=1$:
	\begin{align*}
	&A_{1}=\left(
	\begin{matrix}
	1     &0     &0      \\
	0    &1&0
	\end{matrix}\right),
	&A_{2}=\left(
	\begin{matrix}
	1      &0       &0    \\
	0      &-1&0
	\end{matrix}
	\right),\
	\end{align*}
	\begin{align*}
	&A_{3}=\left(
	\begin{matrix}
	0     &1      &0       \\
	1     &0&0
	\end{matrix}
	\right),
	&A_{4}=\left(
	\begin{matrix}
	0     &1      &0      \\
	-1     &0&0
	\end{matrix}\right).
	\end{align*}
	Let
	\begin{align*}
	&B_{s,t}=\left(
	\begin{matrix}
	1     &0          &0                \\
	0     &(-1)^{s}     &0              \\
	\end{matrix}\right)
	\left(
	\begin{matrix}
	0     &1          &0          \\
	0     &0          &1           \\
	1     &0          &0
	\end{matrix}\right)^{t},
	\end{align*}
	where $s\in[2]^{*}$, $t\in[3]^{*}$. It is easy to see that $\{B_{s,t}\}$ is an SV1B$2$ in $\mathcal{M}_{2\times 3}$.
	
	Let $J_{2\times 3}=P_{0}+P_{1}+P_{2}$,  $P_{l}=P_{0}T^{l}$, $l\in [3]^{*}$, where
	\begin{align*}
	&P_{0}=\left(
	\begin{matrix}
	1     &0          &0              \\
	0     &1     &0
	\end{matrix}
	\right), \ \ T=\left(
	\begin{matrix}
	0     &1         &0          \\
	0     &0         &1          \\
	1     &0         &0
	\end{matrix}\right).
	\end{align*}
	For any $l\in[3]^{*}$, $a\in [2]^{*}$, define a $2\times 3$ matrix $Q_{l}^{a}$ by
	$$ Q_{l}^{a}(i,j)=\left\{
	\begin{array}{lll}
	0               &           & \text{if} \ \ P_{l}(i,j)=0,\\
	(-1)^{a(i-1)}    &           & \text{if} \ \ P_{l}(i,j)=1.
	\end{array} \right.  $$
	Then for each $s\in[2]^{*}$, $t\in[3]^{*}$ and $j\in[4]$,
	
	\begin{align*}
	&C_{0,1}^{s,t}=\left(
	\begin{matrix}
	0                   &B_{s,t}       &0              \\
	0                   &0             &B_{s,t}
	\end{matrix}
	\right),
	&C_{1,1}^{s,t}=\left(
	\begin{matrix}
	0                   &B_{s,t}       &0              \\
	0                   &0             &-B_{s,t}
	\end{matrix}
	\right),
	\end{align*}
	\begin{align*}
	&C_{0,2}^{s,t}=\left(
	\begin{matrix}
	0                   &0             &B_{s,t}       \\
	B_{s,t}             &0             &0
	\end{matrix}
	\right),
	&C_{1,2}^{s,t}=\left(
	\begin{matrix}
	0                   &0             &B_{s,t}       \\
	-B_{s,t}            &0             &0
	\end{matrix}
	\right),
	\end{align*}
	
	\begin{align*}
	&C_{0,0}^{j}=\left(
	\begin{matrix}
	A_{j}         &0             &0        \\
	0             &A_{j}         &0
	\end{matrix}
	\right),
	&C_{1,0}^{j}=\left(
	\begin{matrix}
	A_{j}         &0       &0      \\
	0             &-A_{j}  &0
	\end{matrix}
	\right).
	\end{align*}
	
	Thus $\{C_{0,1}^{s,t}, C_{1,1}^{s,t}, C_{0,2}^{s,t}, C_{1,2}^{s,t}, C_{0,0}^{j}, C_{1,0}^{j}:  s\in[2]^{*}, t\in[3]^{*},j\in[4]\}$ is a $32$-number USV1B$4$ in $\mathcal{M}_{4\times 9}$.
\end{example}

\begin{remark} Theorem~\ref{thm1} gives a unified recursive construction for SUEB$k$s, including the special case about UMEBs when $k=d$ in Corollary~\ref{umeb}. In fact, Corollary~\ref{umeb} generalizes all the recursive constructions about UMEBs  in \cite{Guo16,Zhang19,Wang14} (see Table~\ref{Rec}):  Theorem 1 in \cite{Guo16,Wang14},
	is a special case when $d=d'$ and $p=q$; Theorem 1 in \cite{Zhang19} is the case when $d=d'$; and Theorem 2 in \cite{Zhang19} is the case when $p=q$.  Corollary~\ref{umeb}  can obtain more new examples than the constructions in  \cite{Zhang19}, see Example~\ref{u32}.
\end{remark}

\begin{remark}
	Corollary~\ref{umeb}  can also provide new examples of UMEB in $\C^{d}\otimes \C^{d'}$ that are different from the ones constructed in Section \ref{sec:EBk}. This  can be easily seen from Example~\ref{u32}. Note that we can also get a $32$-number UMEB in $\C^{4}\otimes \C^{9}$ either  from Case $(1)$ with $i=1$ or Case $(4)$ with $t=0$ in Section \ref{sec:EBk}. For both cases, the Schmidt numbers of the states are no more than $1$ in the  complementary subspace of these UMEBs. However, there is a state with Schmidt number $2$ in the complementary subspace of the UMEB in Example~\ref{u32}.
\end{remark}

\begin{table}[]
	\caption{Recursive constructions for UMEBs ($p\leq q, d\leq d'$)}
	\centering
	\renewcommand\tabcolsep{12.0pt}
	\begin{tabular}{ccc}
        \hline\noalign{\smallskip}
		Condition & No. of UMEB & Reference\\
        \noalign{\smallskip}\hline\noalign{\smallskip}
		$N$-.UMEB in $\C^{d}\otimes \C^{d}$   &$((qd)^{2}-q(d^{2}-N))$-.UMEB in $\C^{qd}\otimes \C^{qd}$            & \cite{Wang14,Guo16}\\
		$N$-.UMEB in $\C^{d}\otimes \C^{d}$ &$(pqd^{2}-p(d^{2}-N))$-.UMEB in $\C^{pd}\otimes \C^{qd}$                      & \cite{Zhang19}\\
		$N$-.UMEB in $\C^{p}\otimes \C^{q}$ &$(pqd^{2}-d(pq-N))$-.UMEB in  $\C^{pd}\otimes \C^{qd}$                      & \cite{Zhang19}\\
		$N$-.UMEB in $\C^{d}\otimes \C^{d'}$ &$(pqdd'-p(dd'-N))$-.UMEB in  $\C^{pd}\otimes \C^{qd'}$                      & This paper\\
        \noalign{\smallskip}\hline
	\end{tabular}\label{Rec}
\end{table}

\section{UMEBs from partial Hadamard matrices}
\label{sec:had}

In \cite{Wang17}, the authors gave a construction of UMEBs in $\C^{d}\otimes \C^{d}$ from partial Hadamard matrices. In this section, we generalize this construction to UMEBs in $\C^{d}\otimes \C^{d'}$ with $d\leq d'$.

A Hadamard matrix is a complex square matrix with entries in the unit circle $T$ whose rows are pairwise orthogonal. It is called a partial Hadamard matrix when the number of rows is less than the number of columns. As in Section~\ref{sec:SUEBk}, there is a matrix decomposition $J_{d\times d'}=P_{0}+P_{1}+\cdots P_{d'-1}$, $d\leq d'$. For any $l\in[d']^{*}$, $a\in [d]^{*}$, define a $d\times d'$ matrix $Q_{l}^{a}$ by
$$ Q_{l}^{a}(i,j)=\left\{
\begin{array}{lll}
0               &           & \text{if} \ \ P_{l}(i,j)=0\\
\xi_{d}^{a(i-1)}    &           & \text{if} \ \ P_{l}(i,j)=1
\end{array} \right. .$$
Let $\mathcal{L}_{l}$ be the subspace of $\mathcal{M}_{d\times d'}$ which consists of all matrices in $M_{d\times d'}$ with $m_{i,j}=0$ if $P_{l}(i,j)=0$. Then
$$\mathcal{M}_{d\times d'}=\mathcal{L}_0\oplus\mathcal{L}_1\oplus\cdots\oplus\mathcal{L}_{d'-1},$$
and $\dim\mathcal{L}_l=d$ for all $l\in[d']^{*}$.  Obviously, $Q_{l}^{a}\in\mathcal{L}_l$ for any $l\in[d']^{*}$.  It is easy to see that $\{Q_{l}^a: a\in[d]^*\}$ is an SV1B$d$ in $\mathcal{L}_{l}$ for any  $l\in[d'-1]$. Let $Z_{0}=\{Q_{l}^{a}: l\in[d'-1],a\in[d]^{*}\}$. Similar to the method in Theorem~\ref{thm1}, if there is a USV1B$d$ $Z_{1}$ in $\mathcal{L}_0$, then $Z_{0}\cup Z_{1}$ is a USV1B$d$ in $\mathcal{M}_{d\times d'}$ by Lemma~\ref{usv1}.

Now we construct a USV1B$d$ $Z_{1}$ in $\mathcal{L}_{0}$ from a partial Hadamard matrix.
Given an $m\times d$ partial Hadamard matrix $H_{m\times d}=(h_{i,j})$ with $m<d$, define a $d\times d'$ matrix $H_{y}$, $y\in [m]$, by
$$H_{y}(i,j)=\left\{
\begin{array}{lll}
0               &           & \text{if} \ \ P_{0}(i,j)=0,\\
h_{y,i}   &           & \text{if} \ \ P_{0}(i,j)=1,
\end{array} \right.  $$
and let
$$Z_{1}=\{H_{y}\}_{y=1}^{m},$$
then each $H_{y}\in\mathcal{L}_{0}$  and it is a $d$-singular-value-1 matrix.  Further, $$\text{Tr}(H_y^{\dag}H_{y'})=\sum_{l=1}^{d}\overline{h}_{y,l}h_{y',l}=d\delta_{yy'}$$ for all $y,y'\in [m]$ by the definition of Hadamard matrices. So we only need the unextendibility of $Z_1$, which is equivalent to the unextendibility of the Hadamard matrix $H$.

\begin{theorem}\label{pop}
	Given a partial Hadamard matrix $H_{m\times d}$, then $Z_{0}\cup Z_{1}$ is a $(d(d'-1)+m)$-number UMEB in $\mathcal{M}_{d\times d'}$ if and only if $H_{m\times d}$ can not be extended to an $(m+1)\times d$ partial Hadamard matrix.
\end{theorem}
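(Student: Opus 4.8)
The plan is to show that $Z_0\cup Z_1$ is always an orthonormal set of $d$-singular-value-1 matrices of size $d(d'-1)+m$, and that unextendibility of this set within $\mathcal{M}_{d\times d'}$ is precisely equivalent to unextendibility of the partial Hadamard matrix $H_{m\times d}$. Most of the orthonormality is already in place from the discussion preceding the theorem: $\{Q_l^a : a\in[d]^*\}$ is an SV1B$d$ in each $\mathcal{L}_l$ with $l\in[d'-1]$, the $H_y$ lie in $\mathcal{L}_0$ and are $d$-singular-value-1 matrices, and $\operatorname{Tr}(H_y^\dagger H_{y'})=d\delta_{yy'}$. Since the $\mathcal{L}_l$ are mutually orthogonal in the Hilbert–Schmidt inner product, the only thing to assemble is that the total count is $d(d'-1)+m$ (namely $d$ matrices in each of the $d'-1$ spaces $\mathcal{L}_1,\ldots,\mathcal{L}_{d'-1}$, plus $m$ matrices in $\mathcal{L}_0$) and $d(d'-1)+m<dd'$ because $m<d$, so $Z_0\cup Z_1$ is a proper subset.

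By Lemma~\ref{usv1}, it then suffices to prove: $Z_1=\{H_y\}_{y=1}^m$ is a USV1B$d$ in $\mathcal{L}_0$ if and only if $H_{m\times d}$ cannot be extended to an $(m+1)\times d$ partial Hadamard matrix. For the ``if'' direction, suppose $B\in\mathcal{L}_0$ with $\operatorname{Tr}(H_y^\dagger B)=0$ for all $y\in[m]$ and $B$ is a $d$-singular-value-1 matrix. Writing $B$ in the same block form, $B$ is determined by a vector $b=(b_1,\ldots,b_d)$ indexed by the positions where $P_0(i,j)=1$; because $P_0$ is a permutation matrix, $B$ being $d$-singular-value-1 forces each $|b_i|=1$ (the nonzero singular values of $B$ are exactly the moduli of the entries sitting in distinct rows and columns). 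The orthogonality conditions $\operatorname{Tr}(H_y^\dagger B)=\sum_i \overline{h}_{y,i} b_i=0$ say the unimodular row $(b_1,\ldots,b_d)$ is orthogonal to every row of $H$, so appending it produces an $(m+1)\times d$ partial Hadamard matrix — contradiction. Hence no such $B$ exists and $Z_1$ is unextendible. Conversely, if $H_{m\times d}$ extends by a unimodular row $v$ orthogonal to all existing rows, then the matrix $H_{m+1}\in\mathcal{L}_0$ built from $v$ via the same recipe is a $d$-singular-value-1 matrix orthogonal to all $H_y$, so $Z_1$ is extendible; this establishes the contrapositive of the ``only if'' direction.

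Finally, I would note that when $Z_1$ is a USV1B$d$ in $\mathcal{L}_0$, combining it with the SV1B$d$s in $\mathcal{L}_1,\ldots,\mathcal{L}_{d'-1}$ through repeated application of Lemma~\ref{usv1} (or its obvious iteration over the direct sum $\mathcal{M}_{d\times d'}=\mathcal{L}_0\oplus\cdots\oplus\mathcal{L}_{d'-1}$) yields that $Z_0\cup Z_1$ is a USV1B$d$, equivalently a UMEB in $\C^d\otimes\C^{d'}$ via the one-to-one correspondence; and conversely if $Z_1$ fails to be unextendible then $Z_0\cup Z_1$ inherits an orthogonal $d$-singular-value-1 matrix and fails too. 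The main obstacle is the structural observation that for a matrix supported on the support of a permutation matrix $P_0$, being $d$-singular-value-1 is equivalent to all its nonzero entries being unimodular; this is what translates the ``no maximally entangled state in the complement'' condition cleanly into the Hadamard-row-extension condition, and it is the crux that makes both directions go through.
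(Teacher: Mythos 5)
Your proposal is correct and follows the same route the paper takes: reduce to the unextendibility of $Z_1$ inside $\mathcal{L}_0$ via Lemma~\ref{usv1}, then translate that into the row-extension property of $H_{m\times d}$ using the fact that a matrix supported on the pattern of $P_0$ is a $d$-singular-value-1 matrix exactly when its nonzero entries are unimodular. In fact you supply the details of this last equivalence, which the paper only asserts in the sentence preceding the theorem.
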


\begin{example}\label{had1}
	There is a $(5(d'-1)+3)$-number UMEB in $\C^{5}\otimes \C^{d'}$ for all $d'\geq 5$.
	
	Let
	\begin{align*}
	H_{3\times 5}=(h_{i,j})
	&=\left(
	\begin{matrix}
	1      &1           &1      & 1               &1               \\
	1      &-1          &1      & \omega          &\omega^{2}       \\
	\frac{\sqrt{5}+i}{\sqrt{6}} &i      &\frac{-\sqrt{5}+i}{\sqrt{6}}     &\frac{\sqrt{6}\omega^{2} i+(\omega-1)i}{3}        &\frac{\sqrt{6}\omega i+(\omega^{2}-1)i}{3}
	\end{matrix}\right),\textsc{}
	\end{align*}
	where $\omega=e^{\frac{2\pi i}{3}}$, then it is a partial Hadamard matrix that can not be extended to a $4\times 5$ partial Hadamard matrix \cite{Wang17}. Let $J_{5\times d'}=P_{0}+P_{1}+\cdots +P_{d'-1}$, $P_{l}=P_{0}T^{l}$, $l\in [d']^{*}$, where
	\begin{align*}
	P_{0}
	&=\left(
	\begin{matrix}
	1      &0           &0      & 0               &0      &0             &\cdots    & 0       \\
	0      &1           &0      & 0               &0      &0             &\cdots    & 0       \\
	0      &0           &1      & 0               &0      &0             &\cdots    & 0       \\
	0      &0           &0      &1                &0      &0             &\cdots    & 0       \\
	0      &0           &0      &0                &1      &0             &\cdots     & 0
	\end{matrix}
	\right)_{5\times d'} \text{ and } \ \ T=\left(
	\begin{matrix}
	0      &1        &0      &\cdots       &0              \\
	0      &0        &1      &\cdots       &0              \\
	\vdots &\vdots   &\vdots &\ddots       &\vdots          \\
	0      &0        &0      &\cdots       &1                \\
	1      &0        &0      &\cdots       &0
	\end{matrix}
	\right)_{d\times d'}.
	\end{align*}
	For any  $l\in[d']^{*}$, $a\in [5]^{*}$, define a $5\times d'$ matrix $Q_{l}^{a}$ by
	$$ Q_{l}^{a}(i,j)=\left\{
	\begin{array}{lll}
	0               &           & \text{if} \ \ P_{l}(i,j)=0\\
	\xi_{5}^{a(i-1)}    &           & \text{if} \ \ P_{l}(i,j)=1
	\end{array} \right. , $$
	Let $Z_{0}=\{Q_{l}^{a}, l\in[d'-1],a\in[5]^{*}\}$. Define a $5\times d'$ matrix $H_{y}$, $y\in[3]$, where
	\begin{align*}
	H_{y}
	&=\left(
	\begin{matrix}
	h_{y,1} &0       &0      & 0     &0      &0             &\cdots    & 0       \\
	0       &h_{y,2} &0      & 0     &0      &0             &\cdots    & 0       \\
	0       &0       &h_{y,3}& 0     &0      &0             &\cdots    & 0       \\
	0       &0       &0      &h_{y,4}&0      &0             &\cdots    & 0       \\
	0       &0       &0      &0      &h_{y,5}&0             &\cdots    & 0
	\end{matrix}
	\right)_{5\times d'}.
	\end{align*}
	Then $Z_{0}\cup\{H_{1},H_{2},H_{3}\}$ is a $(5(d'-1)+3)$-number USV1B$5$ in $\mathcal{M}_{5\times d'}$.
\end{example}

\begin{remark}
	Proposition 1 in \cite{Wang17} is a special case of Theorem~\ref{pop} for $d=d'$.  Theorem~\ref{pop} provides new UMEBs, which can be seen from Example~\ref{had1}.
	In fact,  if $d'\geq 10,$ then there is a $5(d'-i)$-number UMEB in $\C^{5}\otimes \C^{d'}$, where $i\in[4]$; if $d'=5+r$, $r\in[4]$, then there is a $5(d'-i)$-number UMEB in $\C^{5}\otimes \C^{d'}$, where $i\in[r]$  \cite{LiMS,Guo16,Chen,ZhanJ}. All these UMEBs have number divisible by $5$. But Example~\ref{had1} constructs a UMEB with number $(5(d'-1)+3)$. The construction in Theorem~\ref{pop} is also different from the constructions in Section~\ref{sec:EBk} for the same reason. See Table \ref{Res} about a summary of numbers in UMEBs.
\end{remark}

\begin{table}[]
	\caption{Results about UMEBs in $\C^{d}\otimes \C^{d'}$.}
	\centering
	\renewcommand\tabcolsep{13.0pt}
	\begin{tabular}{ccc}
		\hline\noalign{\smallskip}
		System                          & No. of UMEB                                   & Reference\\
        \noalign{\smallskip}\hline\noalign{\smallskip}
		$d=d'=2$                        & no UMEB                                       & \cite{Brav}\\
		$d=d'=3$                        & 6                                             & \cite{Brav}\\
		$d=d'=4$                        & 12                                            & \cite{Brav}\\
		$d=d'=5$                        & 23                                            &\cite{Wang17}\\
		$\frac{d'}{2}<d<d'$             & $d^{2}$                                       &\cite{Chen}\\
		$d'\geq 2d$                     & $dm$, $d'-m\in[d-1]$                          &\cite{LiMS}\\
		$d'=d+r, r\in [d-1]$            & $dm$, $d'-m\in[r]$                            &\cite{LiMS} \\
		$d'=qd+r, r\in[d-1]$            & $qd^{2}$                                      & \cite{LiMS,ZhanJ}\\
		$d'\geq 2d$                     & $d(d'-i)$, $i\in[d-1]$                        &\cite{Guo16}\\
		$d'=d+r, r\in[d-1]$             & $d(d'-i)$, $i\in[r]$                          &\cite{,Guo16}\\
		$d=5$, $d'\geq 5$               & $5(d'-1)+3$                                  & This paper\\
        \noalign{\smallskip}\hline
	\end{tabular}\label{Res}
\end{table}

\section{Conclusion and discussion}
We proposed three methods to construct SUEB$k$s (UMEBs) in $\C^{d}\otimes \C^{d'}$:  a construction of SUEB$k$s  from SEB$k$s; a recursive construction of SUEB$pk$s in $\C^{pd}\otimes \C^{qd'}$ from SUEB$k$s in $\C^{d}\otimes \C^{d'}$; and a construction  of UMEBs from unextendible partial Hadamard matrices. We also give three examples: a $28$-number SUEB$4$ in $\C^{6}\otimes \C^{7}$;  a $32$-number UMEB in $\C^{4}\otimes \C^{9}$ constructed from a $4$-number UMEB in $\C^{2}\otimes \C^{3}$; and a $(5(d'-1)+3)$-number UMEB in $\C^{5}\otimes \C^{d'}$, respectively.  Our results cover and improve most results in \cite{Chen,LiMS,Wang17,Guo14,Zhang19,Guo16,ZhanJ,Wang14}. We hope that our results would be useful in studying quantum state tomography and cryptographic protocols with fixed Schmidt number.

Although there exist a lot of constructions of UMEBs and SUEB$k$s, there are still many open questions. Are there UMEBs in $\C^{d}\otimes \C^{d}$ when $d=p \ \text{or}\ 2p $, where $p$ is a prime and $p\equiv 3 \pmod 4$? The minimum open case is when $d=7$,  which was said to have been solved in \cite{Wang17}, but there is a mistake since $k_{4}$ can be zero in the construction. Further, what are the minimum and maximum numbers in SUEB$k$s or UMEBs if they exist? Are there SEB$k$s in $\C^{d}\otimes \C^{d'}$ when $k\nmid dd'$?

\begin{acknowledgements}
F. Shi and X. Zhang are supported by NSFC under Grant No. 11771419,  the Fundamental Research Funds for the Central Universities,
and Anhui Initiative in Quantum Information Technologies under Grant No. AHY150200.
Y. Guo is supported by the Natural
Science Foundation of Shanxi Province under Grant No.
201701D121001, the National Natural Science Foundation
of China under Grant No. 11301312, and the Program for
the Outstanding Innovative Teams of Higher Learning Institutions
of Shanxi.
\end{acknowledgements}



\end{document}